\newcolumntype{C}{>{$}c<{$}} 
\newcommand{\N}{\mathbb{N}}
\newcommand{\Z}{\mathbb{Z}}
\renewcommand{\P}{\mathcal{P}}
\newcommand{\NP}{\mathcal{NP}}
\newcommand{\NPC}{\mathcal{NPC}}
\DeclareMathOperator{\End}{End}
\DeclareMathOperator{\Fun}{Fun}
\DeclareMathOperator{\im}{im}
\theoremstyle:=definition,remark,plain\do{%
	\expandafter\g@addto@macro\csname th@\theoremstyle\endcsname{%
		\addtolength{\thm@preskip}{5pt}
		\setlength{\thm@postskip}{10pt}
	}%
}
\title{The complexity of intersecting subproducts with subgroups in Cartesian powers}
\author{Pim Spelier}
\date{\today}
\theoremstyle{plain}
\newtheorem{thm}{Theorem}[section] 
\newtheorem{lem}[thm]{Lemma} 
\newtheorem{prop}[thm]{Proposition} 
\theoremstyle{definition}
\newtheorem{defn}[thm]{Definition} 
\theoremstyle{remark}
\newtheorem{rem}[thm]{Remark} 
\begin{document}
\begin{abstract}
Given a finite abelian group $G$ and $t\in \N$, there are two natural types of subsets of the Cartesian power $G^t$; namely, Cartesian powers $S^t$ where $S$ is a subset of $G$, and (cosets of) subgroups $H$ of $G^t$. A basic question is whether two such sets intersect. In this paper, we show that this decision problem is NP-complete. Furthermore, for fixed $G$ and $S$ we give a complete classification: we determine conditions for when the problem is NP-complete, and show that in all other cases the problem is solvable in polynomial time. These theorems play a key role in the classification of algebraic decision problems in finitely generated rings developed in \citep{artorders}.
\end{abstract}
\maketitle

\renewcommand{\thefootnote}{\fnsymbol{footnote}} 
\footnotetext{Key words: computational complexity, finite abelian groups. MSC: 20F10 (primary), 68Q25, 20K27 (secondary)}     
\renewcommand{\thefootnote}{\arabic{footnote}}


\section{Introduction}
\label{section:intro}
In this paper, we present a full classification on a natural problem in abelian group theory. Given a finite abelian group $G$ and $t \in \N$, the Cartesian power $G^t$ has two natural types of subset, namely Cartesian products $S^t$ of subsets $S$ of $G$, and cosets $x + H$ of subgroups $H \subset G^t$. While intersecting cosets $x+H$ with other cosets $x + H'$ can be performed in polynomial time, we show that determining whether a coset $x + H$ intersects with a Cartesian products $S^t$ is very often NP-complete, even if $G, S$ are fixed. 

We formally introduce the problems. We phrase the problem in terms of $R$-modules for any commutative ring $R$. For $R = \Z$ we recover the problem on abelian groups mentioned above. For general $R$, the full classification is still open.

\begin{defn}
\label{defn:pigs}
Let $R$ be a commutative ring, let $G$ be a finite $R$-module and $S$ a subset of $G$. Then define the problem $\Pi_{G,S}^R$ as follows: on input $(t,H)$ with $t \in \Z_{\geq 0}$ and $H$ a submodule of $G^t$ given by a list of generators, decide if $H \cap S^t$ is non-empty. Denote $\Pi_{G,S}^\Z$ by $\Pi_{G,S}$.
\end{defn}

\begin{defn}
\label{defn:pgs}
Let $R$ be a commutative ring, let $G$ be a finite $R$-module and $S$ a subset of $G$. Then define the problem $P_{G,S}^R$ as follows: on input $(t,x_*,H)$ with $t \in \Z_{\geq 0}, x_* \in G^t$, and $H$ a submodule of $G^t$ given by a list of generators, decide if $(x_* + H) \cap S^t$ is non-empty. Denote $P_{G,S}^\Z$ by $P_{G,S}$.
\end{defn}

Note that we can without loss of generality assume $R$ is of finite rank over $\Z$, as we can replace $R$ by its image in $\End_\Z(G)$. We remark that $R,G,S$ are \textit{not} part of the input of the problem. In particular, computations inside $G$ can be done in $O(1)$.

These problems certainly lie in $\NP$, as one can give an $R$-linear combination of the generators, and check that it lies in $S^t$. For $R = \Z$, we prove two theorems that completely classify the problems $P_{G,S}$ and $\Pi_{G,S}$, in the sense that for each problem we provide either a polynomial time algorithm or give a proof of NP-completeness.


\begin{thm}[Proposition~\ref{prop:pgseasy},Theorem~\ref{thm:pgsnpc}]
\label{thm:pgs}
If $S$ is empty or a coset of some subgroup of $G$, then we have $P_{G,S} \in \P$. In all other cases, $P_{G,S}$ is NP-complete.
\end{thm}

For the problem $\Pi_{G,S}$, the condition on $G,S$ is slightly different. For example, if $0 \in S$, the intersection will always contain $0$ and the problem is trivial.

\begin{defn}
\label{def:core}
Let $R$ be a ring, let $G$ be a finite $R$-module, and let $S \subset G$ be a subset. We define the \emph{core} $\theta(S)$ of $S$ to be the set
\[
	\theta(S) \coloneq \bigcap_{r \in R \mid rS \subset S} rS.
\]
\end{defn}

\begin{thm}[Theorem~\ref{thm:pigstext}]
\label{thm:pigs}
If $S$ is empty or the core $\theta(S)$ is a coset of some subgroup of $G$, then we have $\Pi_{G,S} \in \P$. In all other cases, $\Pi_{G,S}$ is NP-complete.
\end{thm}

\begin{rem}
\label{rem:psi}
Note that if $0 \in S$, then $\theta(S) = \{0\}$. Additionally, if $G$ is a group with order a prime power and $S$ does not contain 0, then $\theta(S) = S$, as will be proven in Lemma~\ref{lem:psipowerp}.
\end{rem}

In the follow-up paper \citep{artorders} we use these results to determine the complexity of finding roots of a fixed polynomial $f \in \Z[x]$ in finitely generated rings. 

\subsection{Acknowledgements}
This project grew out of the thesis of the author \cite{spelier2018}. It is a pleasure to thank my thesis supervisors Hendrik Lenstra and Walter Kosters for their help. I am also very thankful to Daan van Gent for their comments on a preliminary version of the paper.

\section{Reductions and polynomiality}
\label{section:group}
In this section we will prove some preliminary lemmas on $P_{G,S}$ and $\Pi_{G,S}$. Some of the lemmas we use to prove Theorems \ref{thm:pgs} and \ref{thm:pigs} we give for general $P_{G,S}^R$ (resp. $\Pi_{G,S}^R$) and some only for $P_{G,S}$ (resp. $\Pi_{G,S}$). Throughout this section, let $R$ be a commutative ring, finitely generated as a $\Z$-module. All $R$-modules we consider in this section are finite.

We use the following notation for reductions between computational problems.
\begin{defn}
Let $P,Q$ be two problems. We write $P \leq Q$ if there is a polynomial-time reduction from $P$ to $Q$. We write $P \approx Q$ if $P \leq Q$ and $Q \leq P$.
\end{defn}


\begin{lem}
\label{lem:transinvar}
We have $P_{G,S}^R \approx P_{G,S+g}^R$ for all $g \in G$.
\end{lem}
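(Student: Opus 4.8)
The plan is to exploit the fact that translating the target set $S$ by $g$ has the same effect as translating the affine space $x_* + H$ in the opposite direction, an operation that leaves the submodule $H$ itself untouched. Concretely, for a given instance I would introduce the diagonal vector $g_* := (g, g, \dots, g) \in G^t$ and observe that $(S+g)^t = S^t + g_*$. Since translation by $g_*$ is a bijection of $G^t$ that commutes with intersection, this immediately yields the equivalence
\[
(x_* + H) \cap (S+g)^t \neq \emptyset \iff (x_* - g_* + H) \cap S^t \neq \emptyset.
\]

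First I would use this to build the reduction $P_{G,S+g}^R \leq P_{G,S}^R$: send the instance $(t, x_*, H)$ to $(t, x_* - g_*, H)$. The submodule $H$ is passed through unchanged, so it is still a valid submodule presented by the very same generator list, and only the shift vector is modified. By the displayed equivalence the output is a yes-instance of $P_{G,S}^R$ exactly when the input is a yes-instance of $P_{G,S+g}^R$, so this is a correct many-one reduction. Applying the same construction with $g$ replaced by $-g$ (so that $S$ is shifted to $S-g$ and $(S-g)+g = S$) yields the reverse reduction $P_{G,S}^R \leq P_{G,S+g}^R$, and the two together give $P_{G,S}^R \approx P_{G,S+g}^R$.

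The only point genuinely requiring verification is that the map runs in polynomial time, and here the key is the convention recorded in the remarks above: since $G$, $R$ and $S$ are fixed and \emph{not} part of the input, any single operation inside $G$ costs $O(1)$. Forming $g_*$ and computing $x_* - g_*$ coordinatewise therefore costs $O(t)$, which is linear in the input size, while the generator list of $H$ is simply copied. Thus the reduction is polynomial (in fact linear) time. I expect no real obstacle: the lemma is essentially the observation that the combinatorial difficulty of $P_{G,S}^R$ resides in $H$ and in the shape of $S$ up to translation, never in where $S$ happens to sit inside $G$.
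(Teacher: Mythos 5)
Your proposal is correct and is essentially the paper's own argument: both reductions translate the shift vector $x_*$ by the diagonal element $(g,\dots,g)$ (in the appropriate direction) while leaving $H$ untouched, and obtain the reverse reduction by symmetry. The extra remarks on polynomial running time and the identity $(S+g)^t = S^t + (g,\dots,g)$ only make explicit what the paper leaves implicit.
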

\begin{proof}
For the reduction $P_{G,S}^R \leq P_{G,S+g}^R$, we send an instance $(t,x_*,H)$ to $(t,x_* + (g,\ldots,g),H)$. By symmetry, we also have $P_{G,S+g}^R \leq P_{G,S}^R$; by the definition of $\approx$, we are done.
\end{proof}

\begin{lem}
\label{lem:restrict}
If $G'$ is a submodule of $G$, then we have $P_{G',G' \cap S}^R \leq P_{G,S}^R$.
\end{lem}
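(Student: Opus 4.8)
The plan is to exhibit a reduction that is essentially the identity map on instances, using the inclusion $G' \hookrightarrow G$. An instance of $P_{G',G' \cap S}^R$ consists of $t \in \Z_{\geq 0}$, an element $x_* \in (G')^t$, and a submodule $H$ of $(G')^t$ presented by a list of generators. Since $G'$ is a submodule of $G$, we have $(G')^t \subseteq G^t$ as $R$-modules, so $x_*$ may be read as an element of $G^t$ and the given generators of $H$ may be read as elements of $G^t$; I would map the instance to the instance $(t, x_*, H)$ of $P_{G,S}^R$, where $H$ is now regarded as the submodule of $G^t$ generated by those same elements. This transformation copies the input verbatim, so it is computable in polynomial time; the only point to check is that $R$-linear combinations of the generators are the same whether taken inside $(G')^t$ or inside $G^t$, which holds because $(G')^t$ is an $R$-submodule of $G^t$.

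It then remains to verify that the two instances have the same answer, i.e.\ that $(x_* + H) \cap (G' \cap S)^t$ is non-empty if and only if $(x_* + H) \cap S^t$ is non-empty, where the first intersection is taken in $(G')^t$ and the second in $G^t$. The key observation is that $x_* \in (G')^t$ and $H \subseteq (G')^t$ together force $x_* + H \subseteq (G')^t$. Combining this with the elementary identity $(G' \cap S)^t = (G')^t \cap S^t$ gives
\[
(x_* + H) \cap (G' \cap S)^t = (x_* + H) \cap (G')^t \cap S^t = (x_* + H) \cap S^t,
\]
where the last equality uses $x_* + H \subseteq (G')^t$. Hence the two instances are simultaneously satisfiable or unsatisfiable, which establishes $P_{G',G' \cap S}^R \leq P_{G,S}^R$.

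Since the reduction is the identity on data, there is no genuine obstacle here; the only care required is in the bookkeeping of viewing the same generators and the same coset $x_* + H$ inside the larger ambient module $G^t$, together with confirming that this coset never escapes $(G')^t$, which is exactly what makes intersecting with $S^t$ in $G^t$ coincide with intersecting with $(G' \cap S)^t$ in $(G')^t$.
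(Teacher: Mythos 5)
Your proposal is correct and follows essentially the same route as the paper: the identity reduction, using that $x_* + H \subseteq (G')^t$ together with $(G' \cap S)^t = (G')^t \cap S^t$ to see the yes-instances coincide. Your write-up is in fact slightly more careful about the coset $x_* + H$ than the paper's one-line argument, but the idea is identical.
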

\begin{proof}
Given an instance $(t,x_*,H)$ of the first $P_{G',G' \cap S}^R$, we see it is also an instance of $P_{G,S}^R$, and as $H \cap S^t \subset G'^t \cap S^t = (G' \cap S)^t$, we see it is a yes-instance of the first problem exactly if it is a yes-instance of the second one.
\end{proof}

\begin{lem}
\label{lem:divideout}
Let $G'$ be a submodule of $G$ and $S'$ a subset of $G$, and define $S = S' + G'$. Then we have $P_{G/G',S'}^R \approx P_{G,S}^R$.
\end{lem}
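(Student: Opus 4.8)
The plan is to exploit that $S = S' + G'$ is a union of cosets of $G'$, so that it is the full preimage of a subset of the quotient $G/G'$. Write $\pi \colon G \to G/G'$ for the canonical projection and let $\overline{S'} := \pi(S') \subset G/G'$ be the image of $S'$; then $P_{G/G',S'}^R$ is to be read as $P_{G/G',\overline{S'}}^R$. The first thing I would record is that $\pi^{-1}(\overline{S'}) = S' + G' = S$, and hence, applying $\pi$ coordinatewise to obtain a surjective $R$-module homomorphism $\pi^t \colon G^t \to (G/G')^t$, that $(\pi^t)^{-1}(\overline{S'}^{\,t}) = S^t$. This single identity, valid precisely because $S$ is $G'$-saturated, is what drives both reductions.

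For $P_{G/G',\overline{S'}}^R \leq P_{G,S}^R$ I would \emph{lift}. Given an instance $(t,\overline{x}_*,\overline{H})$ in the quotient, I choose any preimage $x_* \in G^t$ of $\overline{x}_*$ and set $H := (\pi^t)^{-1}(\overline{H})$, a submodule of $G^t$ containing $(G')^t$. Then $x_* + H = (\pi^t)^{-1}(\overline{x}_* + \overline{H})$, so by the preimage identity $(x_* + H) \cap S^t = (\pi^t)^{-1}\big((\overline{x}_* + \overline{H}) \cap \overline{S'}^{\,t}\big)$, and since $\pi^t$ is surjective the left side is non-empty iff the quotient intersection is. To produce $H$ by generators in polynomial time I take lifts of the given generators of $\overline{H}$ together with a fixed generating set of $(G')^t$; as $G'$ is not part of the input, the latter contributes only $O(t)$ generators.

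For the reverse reduction $P_{G,S}^R \leq P_{G/G',\overline{S'}}^R$ I would instead \emph{project}. Given $(t,x_*,H)$, I set $\overline{x}_* := \pi^t(x_*)$ and let $\overline{H} := \pi^t(H)$ be generated by the images of the generators of $H$; both are computed in polynomial time. Since $\pi^t$ is a homomorphism, $\pi^t(x_* + H) = \overline{x}_* + \overline{H}$, and combined with $(\pi^t)^{-1}(\overline{S'}^{\,t}) = S^t$ one checks that a point of $(x_* + H) \cap S^t$ projects into $(\overline{x}_* + \overline{H}) \cap \overline{S'}^{\,t}$, while conversely any $\overline{y}$ in the quotient intersection lifts to some $y \in x_* + H$, which then automatically lies in $(\pi^t)^{-1}(\overline{S'}^{\,t}) = S^t$. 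Hence the two intersections are simultaneously empty or non-empty.

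I do not expect a serious obstacle: the entire content is the preimage identity $S^t = (\pi^t)^{-1}(\overline{S'}^{\,t})$. The only points needing care are bookkeeping — interpreting $S'$ through its image $\overline{S'}$, and verifying that both the lifted submodule $H$ and the projected submodule $\overline{H}$ admit polynomially many generators computable in polynomial time — and these are routine given that $R$, $G$, and $G'$ are fixed rather than part of the input.
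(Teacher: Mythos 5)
Your proof is correct and follows essentially the same route as the paper: for $P_{G/G',S'}^R \leq P_{G,S}^R$ the paper likewise lifts the instance and adds $G'^t$ to the subgroup (i.e.\ takes the preimage under $\pi^t$), and for the converse it projects everything through $G \to G/G'$, both justified by the saturation identity $S = S' + G'$. Your write-up merely makes explicit the bookkeeping (reading $S'$ via its image $\overline{S'}$, and the generator counts) that the paper leaves implicit.
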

\begin{proof}
For the reduction $P_{G/G',S'}^R \leq P_{G,S}^R$ we send an instance $(t,x_*,H)$ to $(t,x_*,H + G'^t)$; this works exactly because of the property $S = S' + G'$. For the reduction $P_{G,S}^R \leq P_{G/G',S'}^R$, we pass everything through the map $G \rightarrow G/G'$.
\end{proof}

For the last lemma, we first introduce a definition.

\begin{defn}
Let $G$ be an $R$-module. A \emph{transformation} on $G$ is a map $\varphi : G \to G$ of the form $x \mapsto c(x) + g$ where $c$ is an $R$-linear endomorphism of $G$ and $g \in G$. For $S \subset G$, we write $S_{\varphi}$ for $S \cap \varphi^{-1}(S)$.
\end{defn}

\begin{lem}
\label{lem:transformation}
Let $G$ be an $R$-module, $S$ a subset of $G$ and $\varphi$ a transformation on $G$. Then $P_{G,S_{\varphi}}^R \leq P_{G,S}^R$.
\end{lem}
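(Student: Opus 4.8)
The plan is to give an explicit reduction built on the defining property of $S_\varphi = S \cap \varphi^{-1}(S)$: an element $y \in G$ lies in $S_\varphi$ exactly when both $y \in S$ and $\varphi(y) \in S$. Hence membership of a point in $S_\varphi^t$ can be tested by checking membership of twice as many coordinates in $S$, and the natural device is to double the number of coordinates and record a point together with its image under $\varphi$, i.e.\ to use the graph of $\varphi$.

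Write $\varphi(x) = c(x) + g$ with $c$ an $R$-linear endomorphism of $G$ and $g \in G$, let $\mathbf{g} = (g, \ldots, g) \in G^t$, and extend $c$ and $\varphi$ to $G^t$ coordinatewise. Given an instance $(t, x_*, H)$ of $P_{G, S_\varphi}^R$, I would output the instance $(2t, x_*', H')$ of $P_{G,S}^R$ with
\[
x_*' = \bigl(x_*,\, c(x_*) + \mathbf{g}\bigr) \in G^{2t}, \qquad H' = \{\, (h, c(h)) : h \in H \,\} \subset G^{2t}.
\]
If $H$ is generated by $h^{(1)}, \ldots, h^{(k)}$, then $H'$ is generated by the elements $(h^{(j)}, c(h^{(j)}))$, so both $H'$ and $x_*'$ are produced from the input in polynomial time (recall computations inside $G$ cost $O(1)$, so applying $c$ coordinatewise to each generator and to $x_*$ is cheap).

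First I would verify that $H'$ is genuinely a submodule and not merely a subset: it is the graph of the $R$-module homomorphism $c|_H \colon H \to G^t$, and since $c$ is $R$-linear it is closed under addition and the $R$-action. Next I would check correctness by unwinding the parametrization $y = x_* + h$ with $h \in H$: because $\varphi(y) = c(x_*) + \mathbf{g} + c(h)$, the affine set $x_*' + H'$ equals exactly $\{\, (y, \varphi(y)) : y \in x_* + H \,\}$. Finally, since $(y, \varphi(y)) \in S^{2t}$ holds iff $y \in S^t$ and $\varphi(y) \in S^t$, which in turn holds iff $y \in S_\varphi^t$, the intersection $(x_*' + H') \cap S^{2t}$ is non-empty precisely when $(x_* + H) \cap S_\varphi^t$ is non-empty; this is the required equivalence of yes-instances.

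I do not anticipate a real obstacle: the construction is essentially forced once one encodes $\varphi^{-1}(S)$ through the graph of $\varphi$. The only points demanding care are the affine bookkeeping — choosing the constant term $c(x_*) + \mathbf{g}$ of $x_*'$ so that $x_*' + H'$ sits on the graph of $\varphi$ over $x_* + H$ — and the routine but necessary observation that $H'$ is a submodule, which is exactly where $R$-linearity of $c$ is used.
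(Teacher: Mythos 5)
Your construction is exactly the one in the paper: the same doubled instance $(2t, x_*', H')$ with $x_*' = (x_*, c(x_*)+\mathbf{g})$ and $H'$ the graph of $c$ restricted to $H$, followed by the same unwinding of when $x_*' + f(h)$ lands in $S^{2t}$. The proposal is correct and matches the paper's proof, with the added (welcome but routine) explicit checks that $H'$ is a submodule and that the map is computable in polynomial time.
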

\begin{proof}
Let $c$ be an $R$-linear endomorphism of $G$, and $g \in G$ such that $\varphi$ is given by $x\mapsto c(x) + g$. Let $(t,x_*,H)$ be an instance of $P_{G,S_{\varphi}}$. Define $\Gamma = G^{t} \times G^t$ with $\pi_1,\pi_2$ the two projections, let $x_*' = (x_*,c(x_*) + g) \in \Gamma$ and $H' = \{(h,c(h)) \mid h \in H\} \subset \Gamma$. Note that $H$ is naturally isomorphic to $H'$ by $f: h \mapsto (h,c(h))$, as $R$ is commutative. We then see that for $h \in H$ we have that $x_*' + f(h) \in S^{2t}$ if and only if $\pi_1(x_*' + f(h)),\pi_2(x_*' + f(h)) \in S^{t}$ if and only if $x_* + h \in S$ and $c(x_*+h)+g = \varphi(x_* +h) \in S$, which is equivalent to $x_* + h \in S_{\varphi}$. This shows that $(2t,x_*',H')$ is a yes-instance of $P_{G,S}^R$ if and only if $(t,x_*,H)$ is a yes-instance of $P_{G,S_{\varphi}}^R$.
\end{proof}

We will now prove the polynomiality result of Theorem~\ref{thm:pgs}.
\begin{prop}
\label{prop:pgseasy}
If $S \subset G$ is empty or a coset of some subgroup of $G$, then $P_{G,S}^R \in \P$.
\end{prop}
\begin{proof}
As a submodule is in particular a subgroup, we have the inequality $P_{G,S}^R \leq P_{G,S}$, so it suffices to prove the lemma assuming that $R = \Z$. If $S$ is empty, then the problem is easy --- the intersection is always empty for $t > 0$ and non-empty for $t = 0$. If $S$ is a coset of some subgroup $G'$, by Lemma~\ref{lem:transinvar} we may assume $S = G'$, and by Lemma~\ref{lem:divideout} the problem is equivalent to $P_{G/G',\{0\}}$. To solve $P_{G/G',\{0\}}$ in polynomial time, we only need to decide whether the single element $-x_*$ is in $H$: this is simply checking whether a linear system of equations over $\Z$ has a solution, which can be done in polynomial time as proven in \citep[\textsection14]{lenstra2008lattices}. Hence we indeed find that $P_{G,S}$ admits a polynomial time algorithm.
\end{proof}

\section{NP-completeness}

We will prove the NP-complete part of Theorem~\ref{thm:pgs} by induction on $|S|$. There are two base cases: $|S| = 2$ for any group $G$, and $|S| = |G| - 1$ for $G = C_2^2$ where $C_2$ is the cyclic group of order $2$. Both cases follow from a reduction from $n$-colorability.
\begin{defn}
Let $n \in \Z_{\geq 1}$ be given. We define the problem $n$-colorability as follows. Given as input a graph $(V,E)$, decide whether there exists a mapping $V \to \{1,\dots,n\}$ such that adjacent vertices have different images.
\end{defn}

\begin{prop}
\label{prop:s3}
Let $G$ be an $R$-module of cardinality at least $3$, and $S$ a subset of cardinality $|G|-1$. Then $P_{G,S}^R$ is NP-complete.
\end{prop}
\begin{proof}
By translating, we can assume $S = G \setminus \{0\}$. We will reduce from $|G|$-colorability.

Let $(V,E)$ be an instance of $|G|$-colorability. Note that $G^V = \{(g_v)_{v\in V} \mid g_v \in G\}$ can be thought of as all ways of assigning elements of $G$ to the vertices. Let $f$ be the homomorphism from $G^V$ to $G^E$ defined by $(g_v)_{v\in V} \mapsto (g_u-g_v)_{(u,v) \in E}$, and note that an assignment in $G^V$ is a $|G|$-coloring if and only if it is sent to an element of $S^E$. Then we define the instance $(t,x_*,H)$ of can take $H$ to be the submodule of $G^E$ generated by the images of $R$-generators of $G^V$, of which we need at most $|G||V|$. This is a valid reduction as $(V,E)$ will be $|G|$-colorable if and only if $H \cap S^E \not=\varnothing$; we can take $x_*$ to be zero. 

As we have $|G| \geq 3$, the $|G|$-colorability problem is NP-complete \citep{garey} hence $P_{G,S}$ is NP-complete.
\end{proof}

\begin{prop}
\label{prop:s2}
Let $G$ be an $R$-module and $S$ a subset of cardinality $2$ which is not a coset of a subgroup. Then $P_{G,S}^R$ is NP-complete.
\end{prop}
\begin{proof}
Write $S = \{s,s+d\}$. Because of Lemma~\ref{lem:transinvar} we can take $s = 0$. Since $S$ is of cardinality $2$ and not a subgroup, we have $-d,2d \not\in S$. By Lemma~\ref{lem:restrict} we are allowed to take $G = Rd$, and by renaming we can take $d = 1$, the module $G$ some finite quotient of $R$ and $S = \{0,1\}$ with $-1,2 \not\in S$.

We reduce from $3$-colorability. Let $C$ be our set of three colors. Given a graph $(V,E)$, we will construct a subgroup $H \subset \Gamma := G^{V \times C} \times G^{V} \times G^V \times G^{E\times C}$ and $x_* \in \Gamma$ such that $H + x_*$ has an element in $T := S^{V \times C} \times S^{V} \times S^V \times S^{E\times C} $ exactly if $(V,E)$ is 3-colorable. Let $\pi_1,\pi_2,\pi_3,\pi_4$ denote the four projections from $\Gamma$ on the four factors $G^{V \times C},G^{V}  ,G^V ,G^{E\times C}$. We take $H$ to be the image of $G^{V \times C}$ under the map 
\begin{align*}
    \varphi: G^{V \times C} &\to \Gamma \\
    f &\mapsto (f,\sigma(f),\sigma(f),\tau(f)) \\
\end{align*}
where we define
\begin{align*}
    \sigma(f)(v) &= \sum_{c \in C} f(v,c)\\
    \tau(f)(e,c) &= \sum_{v \in e} f(v,c).
\end{align*} Furthermore, we choose $x_* = (0,0,-1,0)$. We see $\varphi$ gives an isomorphism $G^{V \times C} \to H$, with inverse $\pi_1$. Note that $\pi_1(\varphi(f) + x_*)$ needs to be in $\{0,1\}^{V \times C}$ for $\varphi(f) + x_*$ to be in $T$, and $\pi_1(\varphi(f) + x_*)\in \{0,1\}^{V \times C}$ happens if and only if $f$ itself is in $\{0,1\}^{V \times C}$.

To prove this is truly a reduction, we interpret $\{0,1\}^{V \times C}$ as assignments of subsets of $C$ to the vertices $V$, using the bijection between $\Fun(V,\{0,1\}^C)$ and $\Fun(V\times C,\{0,1\}) = \{0,1\}^{V \times C}$. A $3$-coloring of $(V,E)$ can then be equivalently redefined as such an assignment $f \in \{0,1\}^{V \times C}$ with the property that for every vertex $v \in V$ we have $\sigma(f)(v) =1$, i.e., each vertex gets a single color, and that for every $c \in C,\{i,j\} \in E$ we have $f(i)(c),f(j)(c)$ not both $1$. It suffices to show that these colorings map under $\varphi$ exactly to those $h \in \varphi(\{0,1\}^{V \times C})$ with $h + x_* \in T$.

Let $f \in \{0,1\}^{V \times C}$ be such a coloring with subsets of $C$, and let $h = \varphi(f)$ be the corresponding element of $H$. Then note that $\pi_2(h + x_*) \in \{0,1\}^{V}$ if and only if $\sigma(f)(v)$ is either $0$ or $1$. Also, $\pi_3(h+x_*)(v) = \sigma(f)(v) - 1$, which is $-1 \not\in S$ if $\sigma(f)(v) = 0$. Hence $(\pi_2(h+x_*),\pi_3(h+x_*)) \in \{0,1\}^{V} \times \{0,1\}^{V}$ if and only if for every vertex $v \in V$ we have $\sigma(f)(v) = 1$.

Finally, note that $\pi_4(h + x_*)(e,c)$ is in $\{0,1\}$ exactly if the two endpoints of $e$ do not both have color $c$. This completes the proof that the elements of $\{0,1\}^{V \times C}$ that are $3$-colorings correspond to $h \in H$ with $h + x_* \in T$, and hence the reduction is completed.
\end{proof}

First, we will do the induction step for a special family of finite groups: $G = \langle a,b \rangle$ with $a\not= b$ and $S$ containing $0,a,b$, but not $a+b$. 

\begin{prop}
\label{prop:pigsspecial}
Let $G$ be a finite abelian group generated by two distinct elements $a, b$, and $S$ a subset of $G$ containing $0,a,b$ but not $a + b$. Assume Theorem~\ref{thm:pgs} holds for all $P_{G',S'}$ with $|G'| + |S'| < |G| + |S|$. Then $P_{G,S}$ is NP-complete.
\end{prop}
\begin{proof}
In this proof, we will heavily use Lemma~\ref{lem:transformation}. We restrict to bijective transformations of the form $x \mapsto cx + g$ with $c = \pm 1$. If $\varphi$ is a transformation on $G$ and $P_{G,S_{\varphi}}$ is NP-complete, so is $P_{G,S}$. For the NP-completeness of the former, we only need $2 \leq |S_{\varphi}| < |S|$ and $S_{\varphi}$ not a coset, and then we are done by the induction hypothesis. We can also interpret this in another way: if two of the three conditions on $S_{\varphi}$ hold, then either we are done immediately, or the third one does not hold, which gives us more information about $S$. If $\varphi$ is bijective, then  $|S_{\varphi}| \leq |S|$ with equality if and only if $\varphi$ induces a permutation of $S$.

We will now prove the following claim: let $A = \{g \in G \mid g,g+a,g+b \in S, g+a+b \not\in S\}$. We already know that $0 \in A$. We will prove that if $g \in A$, then $\Pi_{G,S}$ is NP-complete or $g - 2a \in A$. For the proof, we can by Lemma~\ref{lem:transinvar} assume that $g = 0$.

We do a case distinction, based on whether $a-b$ is in $S$ or not. First, we assume it is. Let $\varphi_1: x\mapsto a+b-x$, and note that $a,b\in S_{\varphi_1},0\not\in S_{\varphi_1}$, so either we are done or $S_\phi \ni a,b$ is a coset, implying that $a + \langle b-a \rangle \subset S$, which we now assume. Now let $\varphi_2$ be the transformation  $x \mapsto a-b + x$. Note $0,a,b\in S_{\varphi_2}$, so $2 \leq |S|$ and $S_{\varphi_2}$ is not a coset. This now tells us that either we are done, or $S = S_{\varphi_2}$, meaning we can write $S = \Sigma + \langle b-a \rangle$ for some set $\Sigma$. Writing $\Gamma = G/\langle b-a \rangle$ we see $\Sigma$ is not a coset in $\Gamma$. Furthermore by Lemma~\ref{lem:divideout} we know $P_{G,S} \approx P_{\Gamma,\Sigma}$. Since $b-a \not= 0$, we have that $|\Sigma| + |\Gamma|$ is strictly smaller than $|S| + |G|$, which means that by the induction hypothesis we know $P_{\Gamma,\Sigma}$ to be NP-complete. Hence $P_{G,S} \in \NPC$ as we wanted to show.

In the remaining case, we have $a-b\not\in S$ and similarly we can assume that $b-a \not\in S$ holds as well. Looking at $x \mapsto b-x$ or $x \mapsto a -x$ we see we can assume $\langle a \rangle, \langle b \rangle \subset S$. Now we look at $\varphi_3: x\mapsto x-a-b$. If $-a-b\in S$, all conditions are met and we are done. So assume $-a-b\not\in S$. Finally taking $\varphi_4: x\mapsto a-b+x$, we can see that we must have $-a + \langle a-b \rangle \subset S$. We now have have $0-2a,a-2a,b-2a\in S, a+b-2a\not\in S$, hence $-2a \in A$.  This proves the claim.

Now $A$ is closed under $g \mapsto g - 2a$ and by symmetry also under $g \mapsto g- 2b$. As $a,b$ are of finite order, we find $2G \subset A$ and hence $S = \{ka + \ell b \mid k,\ell\in\Z, k\ell \equiv 0 \bmod 2\}$ and $G \setminus S = a+b+\langle 2a, 2b\rangle$, meaning $A = 2G$. Dividing out by $A$ and using Lemma~\ref{lem:divideout} we see we $P_{G,S}$ is equivalent to $P_{G/A,S'}$ where $S' = \{0,a,b\}$. Note that $|G/A| = 4$; we know $0,a+b$ are different in $G/A$ as $x \in \langle 2a, 2b\rangle$ implies $x \in S$, and then $a$ is non-zero as we have that $0+b \in S$ but $a + b \not\in S$, hence $a \not \in \langle 2a, 2b\rangle$. So $G/A = C_2^2$ and $|S'| = |G/A|-1$. We have already proven this to be NP-complete in Proposition~\ref{prop:s3}, so we are done.
\end{proof}

Finally, we will prove Theorem~\ref{thm:pgs} in the general case, by reducing to the case in Proposition~\ref{prop:pigsspecial}. For this, we first prove the following little lemma.
\begin{lem}
\label{lem:sab}
Let $G$ be an abelian group, and $S$ a subset of $G$. If $S$ has at least three elements, and the following statement holds
\[
\forall s,a,b: \left(s,s+a,s+b \in S \wedge a\not=b\right) \Rightarrow s+a+b\in S,
\]
then $S$ is a coset.
\end{lem}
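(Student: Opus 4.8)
The plan is to fix a base point and reduce to a subgroup statement. Concretely, I would pick any $x_0 \in S$ and set $T = S - x_0$, so that $0 \in T$ and $|T| = |S| \geq 3$; by the definition of coset it then suffices to show that $T$ is a subgroup. Translating the hypothesis through this shift, the condition becomes: for all $p, q, r \in T$ with $q \neq r$, the element $q + r - p$ lies in $T$.

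The motivating observation is that if this closure held for \emph{all} $p, q, r$ without the restriction $q \neq r$, we would be done immediately, since a nonempty subset of an abelian group that is closed under the ternary operation $(p,q,r) \mapsto q + r - p$ is exactly a coset of a subgroup (it contains $0$, and is closed under negation and addition by specializing two of the arguments). The entire difficulty is therefore the degenerate case $q = r$, which corresponds to the ``doubling'' instances $2q - p$ that the hypothesis does not directly provide.

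I would extract three consequences of the hypothesis. Setting $p = 0$ gives closure under sums of distinct elements: $q + r \in T$ whenever $q \neq r$ in $T$. Setting $q = 0$ (legal precisely when $r \neq 0$) gives $r - p \in T$ for every $r \in T \setminus \{0\}$ and every $p \in T$. The crux is recovering doubling, and this is where $|S| \geq 3$ enters: given $t \in T \setminus \{0\}$, use $|T| \geq 3$ to choose an auxiliary element $w \in T \setminus \{0, t\}$. Then $t + w \in T$ by the distinct-sum case (as $t \neq w$), and applying the hypothesis to $q = t + w$, $r = t$, $p = w$ — valid since $t + w \neq t$ — yields $2t = (t+w) + t - w \in T$. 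I expect this doubling step to be the main obstacle: the auxiliary $w$ is exactly what lets one ``unfold'' a degenerate double into a genuine non-degenerate parallelogram, and it is the only place the cardinality assumption is used.

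With doubling in hand the remainder is routine. Negation follows from $-t = t - 2t$, using the $q = 0$ consequence with $r = t \neq 0$ and $p = 2t \in T$; and full additive closure follows by combining the distinct-sum case with doubling to cover the remaining case $s = t$, where $s + t = 2t \in T$. Thus $T$ contains $0$ and is closed under both addition and negation, so it is a subgroup, and $S = x_0 + T$ is a coset, as desired.
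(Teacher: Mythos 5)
Your proof is correct and follows essentially the same route as the paper's: translate so that $0$ lies in the set, use the cardinality bound to pick a third auxiliary element, and instantiate the hypothesis to recover the degenerate ``doubling'' case (your choice $(p,q,r)=(w,\,t+w,\,t)$ is exactly the paper's instantiation $(s,a,b)=(y,\,x,\,x-y)$), with only a cosmetic difference in how $-t$ is then obtained.
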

\begin{proof}
Since the statement is translation invariant, assume $0 \in S$; we will prove that $S$ is a subgroup. Let $\{0,x,y\}$ be a subset of $S$ of size three, i.e. $x,y$ non-zero and different. As per the property for $s =0, a = x, b = y$, we already have $x+y \in S$, it suffices to prove that $x+x,-x \in S$. As $x+y\in S$, we can apply the property with $(x+y,-x,-x-y)$ to see $-x \in S$ and with $(y,x,-y+x)$ to get $2x \in S$, concluding the proof.
\end{proof}

\begin{thm}
\label{thm:pgsnpc}
Let $G$ be a finite abelian group, and $S \subset G$ a non-empty subset that is not a coset. Then $P_{G,S}$ is NP-complete.
\end{thm}
\begin{proof}
If $|S| = 2$, this is Proposition~\ref{prop:s2}. If $|S| \geq 3$, we can by contraposition of Lemma~\ref{lem:sab} find $s,a,b$ with $s,s+a,s+b\in S$ and $a\not=b$ and $s+a+b\not\in S$; by translating, we can assume $s = 0$. Then, we set $G' = \langle a,b \rangle$ and $S' = S \cap G'$. By Proposition~\ref{prop:pigsspecial}, we know $P_{G',S'}$ is NP-complete, and then by Lemma~\ref{lem:restrict} we find $P_{G,S}$ is NP-complete, as we wanted to show.
\end{proof}

\subsection{Proof of Theorem~\ref{thm:pigs}}
\label{subs:pigs}
We will show that Theorem~\ref{thm:pigs} follows from Theorem~\ref{thm:pgs} and an equivalence of problems. This equivalence holds in fully generality of $R$-modules. Recall the \emph{core} $\theta(S)$ of $S$ is the subset
\[
\theta(S) = \bigcap_{r \in R \mid rS \subset S} rS
\]
of $G$, as per Definition~\ref{def:core}.

\begin{lem}
\label{lem:pgspigs}
With $G$ a finite $R$-module, $S \subset G$ we have the following equivalence of problems
\[
\Pi_{G,S}^R = \Pi_{G,\theta(S)}^R \approx \Pi_{R\theta(S),\theta(S)}^R \approx P_{R\theta(S),\theta(S)}^R
\]
where $R\theta(S)$ means the $R$-module generated by $\theta(S)$.
\end{lem}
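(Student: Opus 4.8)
The plan is to prove the three links of the chain separately, left to right; the first two are routine and the final reduction $P_{R\theta(S),\theta(S)}^R \le \Pi_{R\theta(S),\theta(S)}^R$ carries all the difficulty. Everything rests on one preliminary fact that I would establish first: there is a single scalar $r_0$ in the multiplicative monoid $\mathcal{R} := \{r \in R \mid rS \subset S\}$ with $r_0 S = \theta(S)$. Since $G$, hence $S$, is finite, only finitely many sets $rS$ occur, so $\theta(S) = r_1 S \cap \dots \cap r_k S$ for some $r_i \in \mathcal{R}$; setting $r_0 = r_1 \cdots r_k$ and using that $R$ is commutative gives $r_0 S \subset r_i S$ for every $i$ and hence $r_0 S \subset \theta(S)$, while $r_0 \in \mathcal{R}$ gives the reverse inclusion. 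In particular $\theta(S) \ne \varnothing$ whenever $S \ne \varnothing$; the case $S = \varnothing$ makes every problem in the chain trivial, so I would assume $S \ne \varnothing$ throughout. Write $T = \theta(S)$ and $G' = RT$.

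The first link is an equality of decision problems, not merely a reduction: $\Pi_{G,S}^R$ and $\Pi_{G,T}^R$ have the same inputs $(t,H)$, so it suffices to check that their yes-instances coincide. One inclusion is immediate from $T \subset S$; conversely, if $h \in H \cap S^t$ then $r_0 h \in H$ has every coordinate in $r_0 S = T$, so $r_0 h \in H \cap T^t$. For the second link, note $T \subset G'$, so $G' \cap T = T$ and the inequality $\Pi_{G',T}^R \le \Pi_{G,T}^R$ is the restriction argument of Lemma~\ref{lem:restrict} with $x_* = 0$ (an instance $H \subset G'^t$ is also an instance over $G$ with the same solution set). For the reverse $\Pi_{G,T}^R \le \Pi_{G',T}^R$ I would send $H \subset G^t$ to $H \cap G'^t$; since $T^t \subset G'^t$ we have $H \cap T^t = (H \cap G'^t) \cap T^t$, and $H \cap G'^t$ is computable in polynomial time because $G'$ is a fixed submodule and intersecting two finitely generated submodules of the finite module $G^t$ is a standard lattice computation over $\Z$ (using that $R$ is finitely generated over $\Z$).

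The third link is the heart of the matter. The direction $\Pi_{G',T}^R \le P_{G',T}^R$ is trivial via $(t,H) \mapsto (t,0,H)$. For $P_{G',T}^R \le \Pi_{G',T}^R$ I would homogenize the affine shift. List $T = \{\tau_1,\dots,\tau_m\}$ (a fixed, constant-size set) and, given an instance $(t,x_*,H)$, form the submodule $\tilde H \subset G'^{t+m}$ generated by the vectors $(h,0,\dots,0)$ for $h$ a generator of $H$, together with the one extra generator $(x_*,\tau_1,\dots,\tau_m)$, so that $\tilde H = \{(h + rx_*,\,r\tau_1,\dots,r\tau_m) \mid h \in H,\ r \in R\}$, and I would ask whether $\tilde H$ meets $T^{t+m}$. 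Taking $r = 1$ shows that any solution of the $P$-instance gives a solution of the $\Pi$-instance, since each $\tau_j \in T$. The real content is the converse: a point of $\tilde H \cap T^{t+m}$ forces $r\tau_j \in T$ for all $j$, i.e.\ $rT \subset T$, together with $h + rx_* \in T^t$.

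Recovering a genuine solution of the $P$-instance from this is the main obstacle, and it is settled by two facts whose combination is the crux. First, a closure property special to $\theta$: if $rT \subset T$ then in fact $rT = T$. This is exactly where $T = \theta(S)$ enters, via $r_0$: from $rT = (rr_0)S \subset T \subset S$ we get $rr_0 \in \mathcal{R}$, whence $\theta(S) \subset (rr_0)S = rT$, giving the reverse inclusion. Second, because $G' = RT$, multiplication by $r$ sends the generating set $T$ onto $T$ and is therefore surjective, hence an automorphism of the finite module $G'$; some power $r^d$ then acts as the identity on all of $G'$, and $r^{d-1}$ is its inverse while still satisfying $r^{d-1}T = T$. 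Applying multiplication by $r^{d-1}$ coordinatewise to $h + rx_* \in T^t$ yields $r^{d-1}h + r^d x_* = r^{d-1}h + x_* \in T^t$ with $r^{d-1}h \in H$, a genuine solution. The subtlety I expect to be delicate is precisely that $r$ must be inverted on all of $G'$, not merely on $T$, in order to undo the shift $x_*$ (which need not lie in $T$); this is why the chain is routed through $R\theta(S)$ rather than $G$, and why the closure property and the surjectivity argument have to be used together.
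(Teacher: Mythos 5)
Your proposal is correct and follows essentially the same route as the paper: the same treatment of the first two links, and for the third the same homogenization $\tilde H = H\times\{0\}^m + R\cdot(x_*,\tau_1,\dots,\tau_m)$, followed by the observation that $rT\subset T$ forces $rT=T$ and that $r$ is then a unit of $\End(RT)$ with a power as its inverse. The only (minor, and arguably welcome) difference is that you justify $rT\subset T\Rightarrow rT=T$ via an explicit $r_0$ with $r_0S=\theta(S)$, where the paper invokes the idempotency $\theta^2(S)=\theta(S)$ without proof.
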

\begin{proof}
We have to prove three equivalences, where the first is an equality. Recall that two problems are equal if they have the same set of instances and the same set of yes-instances.

For the first one, note that if $(t,H)$ is a yes-instance of $\Pi_{G,S}^R$ with certificate $h \in H \cap S^{t}$, then 
\[
\left(\prod_{r \in \im(R \to \End(G)) : rS \subset S} r\right)  h
\]
is in $\theta(S)^t$ as $R$ is commutative, hence $(t,H)$ is a yes-instance of $\Pi_{G,\theta(S)}^R$. The other way around, if $(t,H)$ is a yes-instance of $\Pi_{G,\theta(S)}^R$, then it is a yes-instance of $\Pi_{G,S}^R$ since $\theta(S)$ is a subset of $S$, proving the first equality.

For the second one, write $S' = \theta(S), G' = RS'$ and note that $\Pi_{G,S'} \leq \Pi_{G',S'}$ by taking any instance $H$ of the first problem and intersecting it with $G'^t$ using the kernel algorithm from \citep[\textsection14]{lenstra2008lattices}, since $H \cap S'^t = (H \cap G'^t) \cap S'^t$. And by Lemma~\ref{lem:restrict}, the inequality $\Pi_{G',S'} \leq \Pi_{G,S'}$ holds as well.

The third equivalence requires a more complicated reduction. Note $\Pi_{G',S'}^R \leq P_{G',S'}^R$ by taking $x_* = 0$. To show $P_{G',S'}^R \leq \Pi_{G',S'}^R$, let $(t,x_*,H)$ be an instance of $P_{G',S'}^R$. Let $n = |S'|$, and write $S' = \{s_1,\dots,s_n\}$. We construct an instance $(t',H')$ of $\Pi_{G',S'}$ with $t' = t + n$ and $H' = H \times \{0\}^t + R\cdot y_*$ where $y_*$ is defined as $(x_*,s_1,\dots,s_n)$.

We need to check that if $(t,x_*,H)$ is a yes-instance, so is $(t',H')$ and vice versa. For the first implication; if $h \in H$ has $x_* + h \in S^t$, then $h' = y_* + (h,0)$ is an element of $H'$, and lies in $S'$ on every coordinate, hence we see $h' \in H' \cap S'^{t'}$. For the other implication, let $h' = ay_* + (h,0)$ be an element of $H' \cap S'^{t'}$. Looking at the last $|S'|$ coordinates, we see $aS' \subset S'$. But as $\theta(S') = \theta^2(S) = \theta(S) = S'$, we must have $aS' = S'$. Since $a$ induces a bijection on $S'$ and $S'$ generates $G'$ we see $a$ is a unit in $\End(G')$, using the commutativity of $R$. Then some power of $a$ is its inverse in $\End(G')$. Hence we can multiply $h'$ with $a^{-1} \in \End(G')$, and since $a^{-1}S' = S'$ we see $y_* + a^{-1}(h,0) \in S'^{t'}$. Restricting to the first $t$ places, we see $x_* + h \in S'^{t}$, hence $(x_* + H) \cap S'^t \not= \varnothing$ as we wanted to show.
\end{proof}

As a corollary, we obtain the following theorem.

\begin{thm}
\label{thm:pigstext}
If $S$ is empty or the core $\theta(S)$ is a coset of some subgroup of $G$, then we have $\Pi_{G,S} \in \P$. In all other cases, $\Pi_{G,S}$ is NP-complete.
\end{thm}

We will also prove a short lemma about $\theta$ for a special family of $R$-modules.
\begin{lem}
\label{lem:psipowerp}
Let $G$ be an $R$-module such that $A := \im(R \to \End(G))$ is local. Let $S$ be a subset of $G$. Then $\theta(S)$ equals $\{0\}$ if $0 \in S$ and $S$ otherwise.
\end{lem}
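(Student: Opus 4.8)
The plan is to work inside the finite commutative ring $A = \im(R \to \End(G))$. Since multiplication by $r \in R$ on $G$ depends only on the image of $r$ in $A$, the set $rS$ depends only on that image, so I may rewrite $\theta(S) = \bigcap_{a \in A \mid aS \subset S} aS$. As $G$ is finite, $\End(G)$ and hence $A$ is finite; being a finite commutative local ring, $A$ has a maximal ideal $\m$ that is nil, so every non-unit of $A$ is nilpotent. This structural fact is what I will lean on.

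For the case $0 \in S$, no use of locality is needed. Taking $a = 0$ gives $0 \cdot S = \{0\} \subset S$, so $a = 0$ is one of the terms in the intersection and hence $\theta(S) \subset \{0\}$. Conversely, for every $a$ with $aS \subset S$ we have $0 = a \cdot 0 \in aS$ since $0 \in S$, so $0 \in \theta(S)$; thus $\theta(S) = \{0\}$.

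For the case $0 \notin S$, I would first dispose of $S = \varnothing$, where every $a$ satisfies $aS = \varnothing \subset S$ and $\theta(\varnothing) = \varnothing = S$ trivially. Now assume $S$ is nonempty. Taking $a = 1$ shows $S$ itself is a term of the intersection, so $\theta(S) \subset S$, and it remains to prove $aS = S$ for every $a$ with $aS \subset S$. The key step, and the main obstacle, is showing that such an $a$ must be a unit. If instead $a$ were a non-unit, it would be nilpotent, say $a^k = 0$; iterating $aS \subset S$ gives $a^k S \subset S$, while $a^k S = \{0\}$ because $S$ is nonempty, forcing $0 \in S$, a contradiction. Hence $a$ is a unit, so multiplication by $a$ is an automorphism of $G$; then $|aS| = |S|$, and combined with $aS \subset S$ and finiteness of $S$ this gives $aS = S$. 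Intersecting over all valid $a$ yields $\theta(S) = S$, completing the proof.
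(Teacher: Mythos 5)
Your proof is correct and follows essentially the same route as the paper's: both arguments reduce to the finite local ring $A$, observe that every element is either a unit or nilpotent, rule out nilpotent multipliers when $0 \notin S$ by iterating $aS \subset S$, and conclude $aS = S$ for units by a cardinality count. Your explicit treatment of the $S = \varnothing$ edge case is a small extra care the paper glosses over, but it does not change the argument.
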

\begin{proof}
Obviously, if $0 \in S$ then for every integer $a$ we have $0 \in aS$ so $\{0\} \subset \theta(S)$, and $0S \subset S$ hence $\theta(S) \subset \{0\}$, proving the first part. For the second part, in a local finite ring the powers of the maximal ideal must stabilise, which by Nakayama's lemma mean they must become zero. So every element of $A$ is either invertible or nilpotent. If $r \in A$ is nilpotent, and $0 \not\in S$, then $rS \not\subset S$; otherwise, we would have $r^k S \subset S$ for every $k \in \Z_{> 0}$, contradicting with the nilpotency of $r$ and $0 \not \in S$. That means that if $rS \subset S$ then on $G$, we have that $r$ induces an automorphism, and $rS \subset S$ then implies by cardinality that $rS = S$. Hence in this case $\theta(S) = S$, as we set out to prove.
\end{proof}
\begin{rem}
Some important examples of when the conditions are satisfied, are the case where $R$ itself is local, and the case where $R = \Z$ and $G$ has prime power cardinality.
\end{rem}

\bibliographystyle{plain}
\addcontentsline{toc}{section}{References}
\bibliography{references.bib}


\end{document}